\newtheorem{thm}{Theorem}[section]
\newtheorem{lem}[thm]{Lemma}
\newtheorem{cor}[thm]{Corollary}
\newtheorem{clm}[thm]{Claim}
\newtheorem{fact}[thm]{Fact}
\theoremstyle{definition}
\newtheorem{defn}{Definition}[section]
\newtheorem{ques}[defn]{Question}
\theoremstyle{remark}
\newtheorem{rmk}{Remark}[section]
\def\square{\hfill${\vcenter{\vbox{\hrule height.4pt \hbox{\vrule
width.4pt height7pt \kern7pt \vrule width.4pt} \hrule height.4pt}}}$}
\title{Finiteness of mapping class groups and Heegaard distance}
\author{Yanqing Zou}
\date{}                                           % Activate to display a given date or no date
\thanks{ We thank Chao Wang for pointing out Corollary 1.2,  thank Ruifeng Qiu and Ying Zhang for many helpful discussions.This work was partially supported by NSFC No. 12131009 and in part by Science and Technology Commission of Shanghai Municipality (No.22DZ2229014).}
\begin{document}
\begin{abstract}
We prove that the mapping class group of a Heegaard splitting with a distance of at least 3 is finite. However, we have constructed a counterexample with a distance of 2 that disproves this assertion. In addition, the fact that the mapping class group of a Heegaard splitting with a distance of at most 1 is infinite, when combined with our results, provides an answer to the question of the finiteness of mapping class groups as viewed from Heegaard distance.
 
\end{abstract}
\maketitle

\vspace*{0.5cm} {\bf Keywords}: Heegaard Distance, Curve Complex, Mapping class group.\vspace*{0.5cm}

AMS Classification: 57M27

\section{Introduction}
\label{sec1}
Let $S$ be a closed, orientable genus at least 2 surface. Denote its mapping class group by $Mod(S)$. 
Let $H_{1}\cup_{S} H_{2}$ be a Heegaard splitting. Then its mapping class group $Mod(S: H_{1}, H_{2})$ is the subgroup of 
$Mod(S)$, which extends to both $H_{1}$ and $H_{2}$. Usually, given an element $f\in Mod(S)$,  it is hard to know when it  extends to both $H_{1}$ and 
$H_{2}$.  Minsky \cite{Gor} proposed the following question.
\begin{ques}
\label{question1.1}
Is $Mod(S:H_{1}, H_{2})$ finite? finitely generated? finitely presented?
\end{ques}

A Heegaard splitting $H_{1}\cup_{S} H_{2}$ is reducible if there is an essential simple closed curve $c\subset S$ so that it bounds a disk on either side of $S$. Otherwise, it is irreducible. For any irreducible Heegaard splitting, Casson and Gordon \cite{CG} introduce a weakly reducible  Heegaard splitting, i.e., there is s an essential disk $D\subset H_{1}$ (resp. $E\subset H_{2}$) so that $\partial D\cap \partial E=\emptyset$. Otherwise, it is called strongly irreducible.  From the definition, a reducible Heegaard splitting is weakly reducible.  In \cite{Nam}, Namazi proved that the mapping class group of a weakly reducible Heegaard splitting is infinite. Therefore to get finite mapping class group,  we focus on strongly irreducible Heegaard splittings. 

Hempel \cite{Hem} introduce a powerful index-Heegaard distance $d(H_{1}, H_{2})$ for studying Heegaard splitting as follows:
\[d(H_{1}, H_{2})=\min\{n\mid e.s.c.c. a_{i}\subset S, a_{i}\cap a_{i+1}=\emptyset,  ~for ~any~ 0\leq i\leq n-1.\},\]
where $a_{0}$(resp.$a_{n}$) bounds a disk in $H_{1}$ (resp. $H_{2}$). By definition,  a weakly reducible Heegaard splitting corresponds to a distance at most 1 Heegaard splitting  while a strongly irreducible Heegaard splitting has distance at  least 2. In general, high distance Heegaard splittings are generic, see \cite{LM}.

Using Heegaard distance, Namazi \cite{Nam} proved that if $d(H_{1}, H_{2})$ is sufficiently large,  then 
$Mod(S:H_{1}, H_{2})$ is finite; Bestvina-Fujiwara\cite{BF}, Ohshika-Sakuma \cite{OS} strengthened this result  and proved that $Mod(S: H_{1}, H_{2})$ is trivial under  sufficiently large condition; Johnson \cite{Joh} reduced the lower bound and prove that  if $d(H_{1}, H_{2})\geq 4$, then 
$Mod(S: H_{1}, H_{2})$ is embedded into the mapping clas group of M, denoted by $Mod(M)$, as a subgroup. By Thurston's gemetrization theorem proved by Perelman, $M$ is hyperbolic and $Mod(M)$ is finite. Then $Mod(S: H_{1}, H_{2})$ is finite. So we consider distance 2 and 3 Heegaard splittings, i.e., strongly irreducible Heegaard splittings. 

However, Long\cite{Lon} constructed a distance at least 1 Heegaard splitting so that its mapping class group contains a pseudo Anosov map. It means that $Mod(S:H_{1}, H_{2})$ isn't simple as we hoped. In \cite{ZQ}, the auther considered the mapping class group of a locally large distance at least 2 Heegaard splitting and proved that it s mapping class group is finite and so does mapping class group of the corresponding 3-manifold.  Considering the action of mapping class group on  $\mathcal {PML}(S)$, i.e., projective measured lamination space on $S$, we can remove the locally large condition and prove the following result.
\begin{thm}
\label{theorem1.1}
If $d(H_{1}, H_{2})\geq 3$,  then $Mod(S: H_{1}, H_{2})$ is finite.
\end{thm}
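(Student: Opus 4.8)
The plan is to study the action of $Mod(S:H_1,H_2)$ on the space $\mathcal{PML}(S)$ of projective measured laminations and to show that this action fixes only finitely many points, then bootstrap this rigidity to finiteness of the group. First I would recall the subsurface/curve-complex setup: let $\mathcal{D}_1,\mathcal{D}_2\subset \mathcal{C}(S)$ be the disk sets of $H_1$ and $H_2$, which are $Mod(S:H_1,H_2)$-invariant by definition. Since $d(H_1,H_2)\ge 3$, the disk sets are far apart in $\mathcal{C}(S)$; in particular neither disk set is within distance $1$ of the other, and a Masur--Minsky type argument shows that the limit sets $\Lambda_1,\Lambda_2\subset\mathcal{PML}(S)$ (closures of the disk sets, or the laminations obtained as limits of disk-bounding curves) are disjoint closed sets, each a proper closed subset of $\mathcal{PML}(S)$, and both are $Mod(S:H_1,H_2)$-invariant.

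\medskip

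Next I would argue that any $f\in Mod(S:H_1,H_2)$ cannot be pseudo-Anosov. If $f$ were pseudo-Anosov with stable/unstable laminations $\lambda^+,\lambda^-$, then the invariant sets $\Lambda_1,\Lambda_2$ would each have to contain $\{\lambda^+,\lambda^-\}$ in their closure under iteration (north--south dynamics of $f$ on $\mathcal{PML}(S)$ forces every nonempty closed invariant set to contain $\lambda^+$ and $\lambda^-$), contradicting $\Lambda_1\cap\Lambda_2=\emptyset$. This uses the distance $\ge 3$ hypothesis crucially — it is exactly what separates the two limit sets. I would similarly rule out reducible infinite-order elements: a reducible $f$ of infinite order has a canonical reduction system, a nonempty multicurve $\sigma$ that must then interact with $\mathcal{D}_1$ and $\mathcal{D}_2$ in a constrained way; combined with the fact that powers of $f$ act on the (invariant) disk sets, one deduces that $\sigma$, or curves derived from it, would give a path of length $\le 2$ between $\mathcal{D}_1$ and $\mathcal{D}_2$, again contradicting $d(H_1,H_2)\ge 3$. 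Hence every element of $Mod(S:H_1,H_2)$ has finite order, i.e. the group is torsion.

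\medskip

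Finally I would upgrade ``torsion'' to ``finite.'' By Johnson's theorem (quoted in the introduction) $Mod(S:H_1,H_2)$ embeds in $Mod(M)$ once $d\ge 4$; for $d=3$ I cannot invoke that directly, so instead I would use that $Mod(S:H_1,H_2)$ is a subgroup of $Mod(S)$, which is finitely presented and virtually torsion-free — it contains a finite-index torsion-free subgroup $\Gamma$ (e.g. the level-$3$ congruence subgroup). Then $Mod(S:H_1,H_2)\cap\Gamma$ is simultaneously torsion (by the previous paragraph) and torsion-free, hence trivial, so $Mod(S:H_1,H_2)$ injects into the finite group $Mod(S)/\Gamma$ and is therefore finite.

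\medskip

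The main obstacle I expect is making the disjointness of the limit laminations $\Lambda_1,\Lambda_2$ in $\mathcal{PML}(S)$ rigorous and showing it is preserved — one must be careful that ``limit in $\mathcal{PML}(S)$'' of disk-bounding curves behaves well, that $d(H_1,H_2)\ge 3$ genuinely forces $\Lambda_1\cap\Lambda_2=\emptyset$ (a Hausdorff-closeness/filling argument via the Masur--Minsky distance estimates), and that $\Lambda_i\ne\mathcal{PML}(S)$ so that north--south dynamics yields a contradiction rather than a vacuous statement. The reducible case is also delicate: controlling how the canonical reduction system meets both handlebodies, and translating that into a short path in $\mathcal{C}(S)$, is where the distance-$3$ (as opposed to distance-$2$, which fails by the paper's counterexample) hypothesis must be used with care.
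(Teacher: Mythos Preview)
Your overall architecture matches the paper exactly: use Nielsen--Thurston to reduce to ruling out pseudo-Anosov and infinite-order reducible elements, then pass from torsion to finite. Your final step (torsion subgroup of $Mod(S)$ is finite via a torsion-free finite-index subgroup) is in fact more carefully stated than the paper, which simply asserts finiteness after showing every element is periodic.

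The substantive divergence is in how the two non-periodic cases are handled, and here your sketch has real gaps. For the pseudo-Anosov case you want $\Lambda_1\cap\Lambda_2=\emptyset$ to follow from $d(H_1,H_2)\ge 3$, but this does not follow from soft hyperbolicity/quasiconvexity arguments: two quasiconvex subsets of $\mathcal{C}(S)$ at distance $\ge 3$ can certainly share limit points at infinity, and the best a coarse argument yields is $d(H_1,H_2)\le K$ for some constant depending on the quasiconvexity constants---essentially Namazi's large-distance result, not the sharp bound. The paper instead invokes Ackermann's lemma (Lemma~\ref{lemma2.1} here): from the stable/unstable laminations $\mathcal L^\pm$ one explicitly builds, in each handlebody, a disk whose boundary is a union of one arc on $\mathcal L^+$ and one on $\mathcal L^-$, with the $\mathcal L^-$-arc meeting $\mathcal L^+$ at most once. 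Two such disks then have boundaries meeting in at most two points, giving $d\le 2$ directly. This concrete construction, using that $\mathcal L^+$ and $\mathcal L^-$ are the pair of invariant laminations of a \emph{single} pseudo-Anosov, is the technical heart you are missing.

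For the reducible case your sketch is a placeholder; the paper's argument is a genuine case analysis using Oertel's structure theory of handlebody automorphisms (Lemmas~\ref{lemma2.3} and~\ref{lemma2.4}). One must deal with whether the complement of the canonical reduction system is a pants decomposition, is compressible, or is incompressible, and in the last case whether the restriction of $f$ is periodic or pseudo-Anosov; the incompressible pseudo-Anosov subcase forces an $I$-bundle structure and matching essential annuli in both handlebodies. None of this is visible from the reduction system alone, and it is where the sharp bound $d\le 2$ (as opposed to merely ``bounded'') is extracted.
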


\begin{rmk}
By Waldhausen theorem proved by Li\cite{Li01,Li02}, there are only finitely many non-isotopic but genus $g$ Heegaard splittings for $M$. Hence by Theorem \ref{theorem1.1}, $Mod(M)$ is finite. This result gives an alternative way to prove finiteness of $Mod(M)$. 
\end{rmk}
However, among all distance 2 Heegaard splittings, we find an counterexample where  $Mod(S:H_{1}, H_{2})$  contains an infinitely reducible element as follows.
\begin{cor}
\label{cor1.1}
There is a distance 2 Heegaard splitting $H_{1}\cup_{S} H_{2}$ and  two disjoint non-isotopic essential simple closed curves $c_{1}$ and $c_{2}$ in $S$ so that the concatenation of these two Dehn twists $T_{c_{1}}\circ T^{-1}_{c_{2}}$ lies in $Mod(S: H_{1}, H_{2})$. 
\end{cor}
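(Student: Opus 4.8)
The plan is to realize the element by twisting along an annulus that lies on both sides of the splitting, so the first thing I would do is record the following general mechanism. Suppose $H_{1}\cup_{S}H_{2}$ is strongly irreducible and $c_{1},c_{2}\subset S$ are disjoint essential simple closed curves, non-isotopic in $S$, such that $c_{1}\cup c_{2}$ bounds a properly embedded annulus $A_{1}$ in $H_{1}$ and a properly embedded annulus $A_{2}$ in $H_{2}$. Let $\tau_{A_{i}}\colon H_{i}\to H_{i}$ be the homeomorphism supported in a bicollar $A_{i}\times[-1,1]$ that performs one full twist there and is the identity elsewhere. Then $\tau_{A_{i}}$ is the identity on $S$ away from a neighborhood of $c_{1}\cup c_{2}$, and on the two boundary annuli $c_{1}\times[-1,1]$ and $c_{2}\times[-1,1]$ it restricts to Dehn twists about $c_{1}$ and $c_{2}$ of opposite sign, the sign flip being forced because $c_{1}$ and $c_{2}$ receive opposite co-orientations as $\partial A_{i}$. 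Hence $\tau_{A_{i}}|_{S}\in\{\,T_{c_{1}}\circ T_{c_{2}}^{-1},\ T_{c_{1}}^{-1}\circ T_{c_{2}}\,\}$, and after replacing $\tau_{A_{2}}$ by $\tau_{A_{2}}^{-1}$ if necessary we may take $\tau_{A_{1}}|_{S}=\tau_{A_{2}}|_{S}=T_{c_{1}}\circ T_{c_{2}}^{-1}$. Gluing $\tau_{A_{1}}$ to $\tau_{A_{2}}$ along $S$ gives a homeomorphism of the splitting realizing $T_{c_{1}}\circ T_{c_{2}}^{-1}$, so $T_{c_{1}}\circ T_{c_{2}}^{-1}\in Mod(S: H_{1}, H_{2})$; and since $c_{1}$ and $c_{2}$ are essential and non-isotopic, this element is reducible of infinite order. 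It remains to produce one strongly irreducible Heegaard splitting of distance exactly $2$ carrying such a pair of curves.

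For the construction I would work inside a toroidal $3$-manifold. A convenient source is a Seifert fibered space $M$ over the disk with at least three exceptional fibers, so that $M$ is toroidal: the preimage of a curve in the base enclosing exactly two of the exceptional points is an essential vertical torus $\mathcal{T}$. Such an $M$ carries a vertical, strongly irreducible Heegaard splitting $H_{1}\cup_{S}H_{2}$ (Moriah--Schultens), and $S$ may be taken vertical, so that $S\cap\mathcal{T}$ is a union of fibers and $\mathcal{T}\cap H_{i}$ is a union of vertical annuli. One then has to choose the Seifert data and the torus so that this intersection is exactly two fibers $c_{1}\cup c_{2}$, cobounding a single vertical annulus $A_{i}=\mathcal{T}\cap H_{i}$ in each handlebody, with $c_{1},c_{2}$ essential and non-isotopic in $S$; a graph-manifold gluing of two such pieces along a torus is an equally good alternative, and in either case genus $2$ should already suffice, so that one could also write down $H_{1},H_{2},S,c_{1},c_{2}$ by hand. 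Since $M$ contains the incompressible torus $\mathcal{T}$, Hempel's estimate \cite{Hem} gives $d(H_{1},H_{2})\le 2$; combined with strong irreducibility, which excludes $d(H_{1},H_{2})\le 1$, this yields $d(H_{1},H_{2})=2$, as the corollary demands.

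The step I expect to be the real obstacle is confirming strong irreducibility together with the normal form in which $S$ meets $\mathcal{T}$ in only two curves: we are deliberately in the small-distance range, where a slightly wrong choice produces a weakly reducible splitting, so this has to be proved rather than merely hoped for. I would do this by the standard innermost-disk and outermost-arc analysis of a hypothetical pair of disjoint essential disks $D\subset H_{1}$, $E\subset H_{2}$ relative to $\mathcal{T}$: incompressibility of $\mathcal{T}$ forbids $\partial D$ or $\partial E$ from being isotoped off $c_{1}\cup c_{2}$, so each disk crosses $\mathcal{T}$ essentially, and tracking the resulting arcs in the annuli $A_{i}$ and in the subsurfaces of $S$ cut off by $c_{1}\cup c_{2}$, against the known disk sets of the Seifert (or graph-manifold) pieces, should force $\partial D\cap\partial E\neq\emptyset$. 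The remaining points are routine and should be read off the explicit model: that $A_{1}$ and $A_{2}$ are genuinely properly embedded annuli with boundary $c_{1}\cup c_{2}$, and that $c_{1},c_{2}$ are essential and non-isotopic in $S$, so that $T_{c_{1}}\circ T_{c_{2}}^{-1}\neq 1$.
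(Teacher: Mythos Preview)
Your reduction to ``find a distance-$2$ splitting carrying a properly embedded annulus on each side with common boundary $c_1\cup c_2$'' is exactly the mechanism the paper exploits: the annulus twist (equivalently, checking that $T_{c_1}T_{c_2}^{-1}$ sends a disk system to a disk system) is how the paper certifies that the element extends over both handlebodies. So the opening paragraph of your proposal matches the paper's idea.

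Where you diverge is in the construction. You aim for a Seifert fibered or graph manifold with an essential vertical torus $\mathcal T$ and a Moriah--Schultens vertical splitting, reading off $d\le 2$ from Hempel's bound for toroidal manifolds and $d\ge 2$ from strong irreducibility. The paper instead builds the example from scratch: it fixes a single handlebody $H$ with an explicit essential annulus $A'$ (a band-sum of a nonseparating disk with a product annulus over a curve bounding no disk), sets $S'=\overline{\partial H\setminus\partial A'}$, picks a generic pseudo-Anosov $h$ on $S'$ and a power $K$ with $d_{\mathcal C(S')}(\mathcal D(S'),h^K\mathcal D(S'))\ge 15$, and forms $H_1\cup_S H_2=H\cup_{h^K}H$. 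Since $h$ is the identity on $\partial A'$, the same annulus sits in both $H_1$ and $H_2$; and because $S'$ is a compressible hole for both disk sets, the Masur--Schleimer lemma (Lemma~\ref{lemma2.5}) forces $d_{\mathcal C(S')}(\partial D_1',\partial D_2')\le 14$ for any putative weak-reduction pair, contradicting the choice of $K$. This gives $d(H_1,H_2)=2$ by a short computation rather than a structural argument.

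What each approach buys: the paper's route is self-contained and complete---non-isotopy of $C_1,C_2$ and the exact distance are read off the construction, with the subsurface-projection bound doing all the work. Your route is more geometric and gets $d\le 2$ for free, but the parts you flag as ``the real obstacle'' are genuine: you still have to exhibit a specific vertical splitting in which $S\cap\mathcal T$ is exactly two curves cobounding a single annulus on each side, and you must check that these two fibers are \emph{non-isotopic in $S$} (regular fibers are isotopic in $M$, and on a low-genus vertical Heegaard surface this is not automatic). None of this is impossible, but it is exactly the delicate verification the paper's pseudo-Anosov gluing is designed to avoid.
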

Since $c_{1}$ is not isotopic to $c_{2}$,  $T_{c_{1}}\circ T^{-1}_{c_{2}}$ generates an infinite subgroup of 
$Mod(S:H_{1}, H_{2})$.  We will introduce some lemmas in Section \ref{sec2}, prove Theorem \ref{theorem1.1} in 
Section \ref{sec3} and Corollary \ref{cor1.1} in Section \ref{sec4}.
%%%%%%%%%%%%%%%%%%%%%%%%%%%%%%%%%%%%%%%%%%%%%%%%%%%%%%%%%%%%%%%%%%%%%%%%%%%%%%%%%%%%%%%%%%%%%%%%%%%%%%%%%%%
\section{Some Lemmas}
\label{sec2}
Let $S$ be a closed, orientable surface with genus $g\geq 2$. By Nielson-Thurston's classification,  each element in $Mod(S)$ is either periodic, reducicble or pseudo-anosov.  For each pseudo-anosov element $f\in Mod(S)$, its has a stable lamination $(\mathcal{L}^{+}, \mu_{+})$ and unstable lamination $(\mathcal{L}^{-}, \mu_{-})$ in $\mathcal {PML}(S)$. 

\subsection{Dynamics of $Mod(S)$}
\subsubsection{Pseudo anosov map} Suppose $H$ is a handlebody with boundary surface $S$. Then the collection of essential simple closed curves which bound essential disks in $H$  is embedded into $\mathcal{PML}(S)$, and the closure of this subset, denoted by $\Lambda(H)$, has an empty interior in $\mathcal{PML}(S)$.  It is known that not every element of $Mod(S)$ can extend to the whole $H$. For example, the Dehn twist along a disk-busting curve, i.e., it intersects every disk boundary curve nontrivially. Moreover, almost all elements of $Mod(S)$ can't extend over $H$. For pseudo-anosov elements are generic in $Mod(S)$ and if a pseudo-anosov element $f$ extends over $H$,  then by \cite{BJM}, both of its stable and unstable laminations lie in $\Lambda(H)$, which has empty interior in $\mathcal{PML}(S)$. 

It is known that for stable and unstable lamination of  $f$ and any essential simple closed curve $C\subset S$, $f^{n}(C)$ goes to a geodesic lamination $L^{+}$ that the stable lamination $\mathcal{L}^{+}\subset L^{+}$. So for any $\delta>0$, there is $N$ so that for any $n\geq N$, the Hausdorff distance $d_{H}(f^{n}(C), \mathcal {L}^{+})\leq \delta$. Similarly, we have $d_{H}(f^{-n}(C), \mathcal {L}^{-})\leq \delta$. 
Therefore, for any $f\in Mod(S)$ which extends over $H$, if we choose a disc boundary curve $C$, then for any 
$n$, $f^{n}(C)$ bounds an essential disk in $H$. So there is a sequence of disc boundary curves converging to 
$L^{+}$ under  the Hausdorff metric, which in Definition 2.1\cite{Ack}, $L^{+}$ bounds in handlebody $H$. This result implies that the disk boundary curve in $H$ under $f$ seems to lie in $L^{+}$. Considering the measure $(\mathcal {L}^{+},\mu_{+})$ in $\mathcal {PML}(S)$,  Ackermann\cite{Ack} proved the following lemma. 
\begin{lem} [Lemma 3.3]
\label{lemma2.1}
Let $\delta>0$ be small and let $C$ be a geodesic simple closed curve with $d_{H}(C, \mathcal{L}^{-})<\delta$. Suppose in addition that $C$ and $L^{+}$ both bound in a handlebody $H$. Then for any 
small $\epsilon >0$, there are arcs $a^{+}\subset \mathcal{L}^{+}$, $a^{-}\subset \mathcal {L}^{-}$ such that
$a^{+}\cup a^{-}$ is the boundary of a disk, and
\begin{itemize}
\item $\mu_{+}(a^{-})<\epsilon$;
\item  $m(2\epsilon)-2r\leq \mu_{-}(a^{+})\leq 2M(\epsilon)+2r$, where $r=r(\delta)$.
\end{itemize}
\end{lem}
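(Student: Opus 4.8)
The plan is to realize the bigon $a^{+}\cup a^{-}$ as a limit of honest compressing disks of $H$. Since $L^{+}$ bounds in $H$, there is, as recalled above via Definition 2.1 of \cite{Ack}, a sequence of essential simple closed curves $C_{n}=\partial D_{n}$, with $D_{n}\subset H$ an essential disk and $d_{H}(C_{n},L^{+})\to 0$; because $\mathcal{L}^{+}\subset L^{+}$ is minimal and fills $S$, the $C_{n}$ fellow-travel leaves of $\mathcal{L}^{+}$ over longer and longer segments, and in particular $|C\cap C_{n}|\to\infty$. Fix the disk $D\subset H$ with $\partial D=C$, and isotope $D_{n}$ so that $D\cap D_{n}$ is transverse, contains no closed components (remove innermost ones by the usual exchange, legitimate since $H$ is irreducible), and has $|C\cap C_{n}|$ minimal; then $D\cap D_{n}$ is a finite union of arcs with endpoints on $C\cap C_{n}$.

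A standard outermost-disk argument on $D\cap D_{n}$ now yields a sub-arc $\eta_{n}$ of $C_{n}$, a sub-arc $c_{n}$ of $C$ with $\partial c_{n}=\partial\eta_{n}$, and an embedded disk $\mathbb{D}_{n}\subset H$ with $\mathbb{D}_{n}\cap S=\partial\mathbb{D}_{n}=c_{n}\cup\eta_{n}$, the latter an (a priori possibly inessential) simple closed curve; by choosing the outermost arc from deep in the sequence and banding onto the correct side, $c_{n}$ can be taken as short as we wish, so $\mu_{+}(c_{n})$ is as small as we please. Since $c_{n}$ is a sub-arc of a curve that is $\delta$-Hausdorff close to the minimal filling lamination $\mathcal{L}^{-}$, it $\delta$-fellow-travels a sub-arc $\hat c_{n}$ of a leaf of $\mathcal{L}^{-}$; since $\eta_{n}$ is a sub-arc of a curve Hausdorff close to $L^{+}\supset\mathcal{L}^{+}$, it runs close to a sub-arc $\hat\eta_{n}$ of a leaf of $\mathcal{L}^{+}$, the closeness improving with $n$.

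To land the arcs \emph{exactly} in the laminations, let $n\to\infty$. The segments $\hat c_{n}$, $\hat\eta_{n}$ are leaf-segments of $\mathcal{L}^{-}$, $\mathcal{L}^{+}$ of bounded length, so by compactness of the space of such segments in the Hausdorff topology, after passing to a subsequence $\hat c_{n}\to a^{-}\subset\mathcal{L}^{-}$ and $\hat\eta_{n}\to a^{+}\subset\mathcal{L}^{+}$; tracking everything through a single train track that carries $C$ (hence $\mathcal{L}^{-}$) and all large $C_{n}$ (hence $\mathcal{L}^{+}$), one sees that the banded disks $\mathbb{D}_{n}$ persist in the limit, so $a^{+}\cup a^{-}$ bounds a disk in $H$. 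For the first estimate, replacing $c_{n}$ by $\hat c_{n}$ alters the $\mu_{+}$-mass only inside $\delta$-neighbourhoods of the two endpoints, a change of at most $r=r(\delta)$ with $r(\delta)\to0$, so combined with the smallness of $\mu_{+}(c_{n})$ we get $\mu_{+}(a^{-})<\epsilon$ in the limit. For the second, one invokes the local product (flat) structure of the transverse pair $(\mathcal{L}^{+},\mathcal{L}^{-})$: in it the arc $\hat c_{n}$ is ``horizontal'' of $\mu_{+}$-width $<\epsilon$, and $\hat\eta_{n}$ is a ``vertical'' leaf-segment of $\mathcal{L}^{+}$ that closes up across this thin band --- of total width at most $2\epsilon$, once one accounts for the leaf returning --- so its $\mu_{-}$-height is pinched between the extremal heights $m(2\epsilon)$ and $2M(\epsilon)$ of a band of that width; adding the $2r$ endpoint error over the two sides gives $m(2\epsilon)-2r\le\mu_{-}(a^{+})\le2M(\epsilon)+2r$.

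The main obstacle is this limiting step: the compressing disks $D_{n}$, hence the $\mathbb{D}_{n}$, need not converge to any disk, so the claim that $a^{+}\cup a^{-}$ still compresses in $H$ must be argued entirely on $S$, through the carrying train track and the convergence of the banding regions, and one must check that both measure estimates pass to the limit with the stated constants. A secondary subtlety is the lower bound $m(2\epsilon)-2r$: it needs the band to be non-degenerate --- that $a^{+}$ genuinely crosses a definite amount of $\mathcal{L}^{-}$ --- which follows from minimality of $\mathcal{L}^{-}$ and the fact that $c_{n}$ is a nontrivial sub-arc joining two distinct points of $C\cap C_{n}$; this same non-degeneracy also shows $a^{+}\cup a^{-}$ is not the boundary of a trivial disk.
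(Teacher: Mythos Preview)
The paper does not contain a proof of this lemma. It is quoted verbatim as Lemma~3.3 of Ackermann~\cite{Ack} and used as a black box; the only argument supplied in the surrounding text is the observation that if a pseudo-Anosov $f$ extends over $H$ then iterating a disk boundary under $f^{\pm n}$ produces sequences of disk boundaries Hausdorff-converging to $L^{\pm}$, which is precisely the input hypothesis ``$L^{+}$ bounds in $H$'' that the lemma assumes. So there is no in-paper proof to compare your attempt against; any assessment would have to be against Ackermann's original argument.

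On the substance of your attempt: the outline---produce bigon disks $\mathbb D_n$ from outermost arcs of $D\cap D_n$, then pass to a limit---is the natural strategy, and you correctly identify the limiting step as the crux. But as written that step is not an argument: saying ``tracking everything through a single train track \ldots\ one sees that the banded disks $\mathbb D_n$ persist in the limit'' is exactly the content that needs proof, since the $\mathbb D_n$ live in the $3$-manifold $H$ and Hausdorff convergence of their boundaries on $S$ gives no control on the disks themselves. A second gap is the claim that the outermost arc $c_n\subset C$ ``can be taken as short as we wish'': outermost arcs on $D$ are determined by the intersection pattern $D\cap D_n$, and there is no a priori reason the shortest outermost sub-arc of $C$ should shrink as $n\to\infty$---indeed as $C_n$ winds more densely along $\mathcal L^{+}$ the outermost arcs of $C$ could stabilise at a fixed scale. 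Without that, the bound $\mu_{+}(a^{-})<\epsilon$ does not follow. Finally, your derivation of the two-sided bound on $\mu_{-}(a^{+})$ invokes quantities $m(2\epsilon)$, $M(\epsilon)$ without ever saying what they are; in Ackermann's paper these are specific functions coming from a rectangle decomposition associated to the pair $(\mathcal L^{+},\mathcal L^{-})$, and the inequalities are obtained by a concrete count in that structure, not by the informal ``horizontal/vertical'' picture you sketch.
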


By Lemma \ref{lemma2.1}, we have the following lemma.
\begin{lem}
\label{lemma2.2}
If $\delta$ small enough, the arc $a^{-}$ in Lemma \ref{lemma2.1} intesects $\mathcal{L}^{+}$  in at most one point. 
\end{lem}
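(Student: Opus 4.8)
The plan is to extract a contradiction from assuming that the arc $a^{-}\subset\mathcal{L}^{-}$ furnished by Lemma~\ref{lemma2.1} meets $\mathcal{L}^{+}$ in at least two points, once $\delta$ is taken sufficiently small. First I would recall the quantitative conclusions of Lemma~\ref{lemma2.1}: the endpoints of $a^{+}$ and $a^{-}$ match up so that $a^{+}\cup a^{-}$ bounds an embedded disk in $S$, with $\mu_{+}(a^{-})<\epsilon$ and $\mu_{-}(a^{+})$ trapped between $m(2\epsilon)-2r$ and $2M(\epsilon)+2r$ where $r=r(\delta)\to 0$ as $\delta\to 0$. The key structural point is transversality of the two measured laminations: $\mathcal{L}^{+}$ and $\mathcal{L}^{-}$ fill $S$ and meet transversally, so each intersection point of $a^{-}$ with $\mathcal{L}^{+}$ carries a definite amount of $\mu_{+}$-transverse measure in any neighbourhood inside $a^{-}$, because $a^{-}$ is a leaf segment of $\mathcal{L}^{-}$ crossing $\mathcal{L}^{+}$ (a minimal lamination), and the local product structure of $\mathcal{L}^{+}$ near a crossing forces the $\mu_{+}$-length of even a small subarc of $a^{-}$ straddling the crossing to be bounded below by some constant $\kappa>0$ depending only on the pair $(\mathcal{L}^{+},\mathcal{L}^{-})$ and not on $\delta$ or $\epsilon$.

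The main step is then a pigeonhole/counting argument. If $a^{-}$ crossed $\mathcal{L}^{+}$ in two or more points, then by the transverse-measure lower bound above one would get $\mu_{+}(a^{-})\geq 2\kappa$ (or at least $\geq\kappa$ beyond what a single crossing contributes, but a single crossing already contributes $\geq\kappa$, so two give $\geq 2\kappa$; more carefully, $n$ crossings give $\mu_{+}(a^{-})\ge n\kappa$ up to the subtlety that $\kappa$ should be taken as the minimal $\mu_{+}$-mass between consecutive crossings along $a^-$, which exists by minimality of $\mathcal{L}^+$ and compactness). Choosing $\epsilon$ in Lemma~\ref{lemma2.1} small enough that $\epsilon<\kappa$ — which we may do, since $\epsilon>0$ was arbitrary there — the bound $\mu_{+}(a^{-})<\epsilon<\kappa$ rules out two crossings. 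Hence $a^{-}$ meets $\mathcal{L}^{+}$ in at most one point. The role of ``$\delta$ small enough'' is just to guarantee, via $r=r(\delta)$, that the disk and the arcs of Lemma~\ref{lemma2.1} actually exist and that $a^{-}$ is genuinely a subarc of a leaf of $\mathcal{L}^{-}$ staying $\delta$-close to $\mathcal{L}^{-}$; in particular $\delta$ is chosen after $\epsilon$.

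I would organize the write-up as: (i) fix $\epsilon$ with $\epsilon<\kappa$ where $\kappa$ is the minimal transverse $\mu_{+}$-mass of a leaf segment of $\mathcal{L}^{-}$ between two successive intersections with $\mathcal{L}^{+}$ (finiteness/positivity of $\kappa$ from minimality of $\mathcal{L}^{+}$, transversality of $\mathcal{L}^{+}$ and $\mathcal{L}^{-}$, and compactness of $S$); (ii) apply Lemma~\ref{lemma2.1} with this $\epsilon$ and a correspondingly small $\delta$ to obtain $a^{+},a^{-}$ with $\mu_{+}(a^{-})<\epsilon$; (iii) if $a^{-}$ had $\ge 2$ intersection points with $\mathcal{L}^{+}$, extract from it a subarc between two successive such points of $\mu_{+}$-mass $\ge\kappa>\epsilon$, contradicting $\mu_{+}(a^{-})<\epsilon$.

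The hard part will be pinning down the lower bound $\kappa>0$ rigorously, i.e., showing that a leaf arc of $\mathcal{L}^{-}$ cannot accumulate intersections with $\mathcal{L}^{+}$ without accumulating $\mu_{+}$-mass. This uses that $\mathcal{L}^{+}$ has no atoms and that, because both laminations are minimal and fill, every complementary region of $\mathcal{L}^{+}$ is a disk or once-punctured disk with finitely many cusps, so a leaf of $\mathcal{L}^{-}$ spends a uniformly bounded combinatorial ``time'' in each complementary region between crossings and the transverse measure picked up crossing $\mathcal{L}^{+}$ is uniformly bounded below — a standard but slightly delicate compactness fact about transverse pairs of filling laminations that I would state carefully and attribute to the structure theory of measured laminations (cf.\ the train-track coordinates in \cite{Ack}), rather than reprove.
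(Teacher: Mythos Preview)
Your proposal is correct and follows essentially the same route as the paper: assume $a^{-}$ meets $\mathcal{L}^{+}$ in at least two points and deduce a uniform positive lower bound on $\mu_{+}(a^{-})$, contradicting $\mu_{+}(a^{-})<\epsilon$. The paper's version is much terser---it invokes density of a boundary leaf $\gamma\subset\mathcal{L}^{+}$ to say two intersections with $\mathcal{L}^{+}$ force two transverse crossings of $\gamma$, hence $\mu_{+}(a^{-})$ exceeds a constant depending only on $S$---whereas you spell out the existence of the constant $\kappa$ via minimality, transversality, and compactness (or train-track weights); but the mechanism and the contradiction are the same.
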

\begin{proof}
Suppose that $a^{-}$ intersects $\mathcal {L}^{+}$ in at least two points $p$ and $q$. Let $\gamma$ be a boundary leaf in $\mathcal {L}^{+}$.  Since $\gamma$ is dense in $\mathcal {L}^{+}$,  $a^{-}$ intersects transversely  $\gamma$ in at least two points.  It means that $\mu_{+}(a^{-})$ is larger than a constant depending on $S$. A contracdition.
\end{proof}

\subsubsection{Infinitely reducible map} Let $f\in Mod(S)$ be an infintely reducible element. Then there are finitely many disjoint and non-isotopic $f$-invariant essential simple closed curves $\{C_{1},...,C_{k}\}$ in $S$. Without loss of generality, we assume that $\{C_{1},..,C_{k}\}$ is the canonical minimal reducible system and each $C_{i}$ bounds no disk in $H$.  Then each component of $S-\cup_{i=1}^{k} C_{i}$ is either a pair of pants or a non-pants compact subsurface $R$ with $\chi(R)\leq -1$. If each component of $\partial H-\{C_{1},...,C_{k}\}$ is a pair of pants, then some power of $f$ is the concatenation of some powers of Dehn twists along these $C_{i}$. 
\begin{lem} [Lemma 2.14, \cite{Oer}]
\label{lemma2.3}
Suppose that each component of $\overline{\partial H-\{C_{1},...,C_{k}\}}$ is a pair of pants.
Then for each $1\leq i\leq k$, there is an essential annulus $A_{i}\subset H$ so that $C_{i}\subset \partial A_{i}$.
\end{lem}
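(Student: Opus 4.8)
The plan is to read the lemma inside the setting established in the paragraph preceding it: the curves $\{C_1,\dots,C_k\}$ are the canonical reducing system of an element $f$ that extends over $H$, none of them bounds a disk, and the pants hypothesis is exactly what kills the reduced dynamics on each complementary piece. Since every component of $\overline{\partial H-\{C_1,\dots,C_k\}}$ is a pair of pants and the mapping class group of a pair of pants is finite, a suitable power $\phi=f^{N}$ fixes every $C_i$ and every pants piece and restricts to the identity on each piece up to isotopy; hence $\phi$ is isotopic on $\partial H$ to a multitwist $\prod_{i=1}^{k}T_{C_i}^{\,m_i}$, as already noted before the lemma. Because $\{C_1,\dots,C_k\}$ is the canonical reducing system of the multitwist $\phi$, the twisting coefficients satisfy $m_i\neq 0$ for every $i$, and $\phi$ extends to a homeomorphism $\Phi$ of $H$ because $f$ does. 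This reduces the lemma to a statement about the handlebody group: if a multitwist along disjoint curves $C_1,\dots,C_k$, none of which bounds a disk, extends over $H$ with all $m_i\neq 0$, then each $C_i$ is a boundary component of an essential annulus.

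To prove that reduced statement I would first record the elementary direction, which is also the mechanism behind Corollary \ref{cor1.1}: if two disjoint essential curves $c,c'$ cobound an essential annulus $A\subset H$, then the annulus twist $T_{c}\circ T_{c'}^{-1}$ is realized by a homeomorphism of $H$ supported in a neighborhood $A\times I$, so it lies in the handlebody group, whereas a single Dehn twist $T_{C_i}$ extends over $H$ only when $C_i$ bounds a disk. Thus, up to disk twists (which are absent here since no $C_i$ bounds a disk), the multitwists along $\{C_i\}$ seen by the handlebody group are exactly products of annulus twists. The key input is the converse direction, and I would obtain it from the Jaco--Shalen--Johannson characteristic submanifold $\Sigma\subset H$: isotope the infinite-order homeomorphism $\Phi$ so that it preserves $\Sigma$, is fibre-preserving on the $I$-bundle and Seifert pieces, and has a finite power equal to the identity on the simple complementary pieces. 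Then the entire infinite-order twisting of $\phi$ must be carried by fibre twists along the frontier (and vertical) annuli of $\Sigma$, which are essential in $H$.

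With this structure in hand I would finish by reading off the annuli. Put $\Phi$ in the standard form above and expand $\phi$ as a product of annulus twists $T_{a}\circ T_{b}^{-1}$, where each pair $\{a,b\}$ cobounds an essential annulus; after normalizing these curves against the multicurve $\bigcup_i C_i$ (possible because $\phi$ is supported there), every such $a,b$ is isotopic to one of the $C_i$. Comparing twisting coefficients, the vector $(m_1,\dots,m_k)$ is an integral combination of the vectors $e_a-e_b$ coming from the annulus pairs; since each $m_i\neq 0$, every index $i$ must occur in at least one pair, so $C_i$ is a boundary component of an essential annulus $A_i$. This produces the desired $A_i$ for all $i$.

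The hard part will be the middle step: controlling precisely which multitwists extend over $H$. The easy direction (annulus twists extend, lone non-meridian twists do not) is immediate, but the converse — that no extending multitwist arises beyond products of annulus twists — is where the characteristic submanifold theory and the attendant standard form for $\Phi$ do the real work, and this is exactly the content isolated in Lemma 2.14 of \cite{Oer}. The only other point needing care is the normalization of the annulus boundary curves against $\bigcup_i C_i$, which I expect to handle by the usual innermost-disk and outermost-arc surgeries together with the irreducibility and $\partial$-irreducibility of $H$.
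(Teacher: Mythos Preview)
The paper does not supply its own proof of this lemma; it is quoted verbatim as Lemma~2.14 of \cite{Oer} and used as a black box. So there is no in-paper argument to compare against.

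Your outline is a faithful reconstruction of Oertel's approach. You correctly extract from the ambient paragraph that a power $\phi=f^{N}$ is a multitwist $\prod T_{C_i}^{m_i}$ extending over $H$, that no $C_i$ bounds a disk, and that the canonical reducing system forces every $m_i\neq 0$ (since the canonical system of $f$ coincides with that of $f^N$, and for a multitwist the canonical system is exactly the support). The heart of the matter is then, as you say, identifying which multitwists lie in the handlebody group: Oertel's standard form places $\Phi$ relative to the characteristic $I$-bundle/Seifert submanifold of $(H,\partial H)$, and the infinite-order twisting is absorbed by twists along essential frontier annuli, forcing each $C_i$ with $m_i\neq 0$ to be isotopic to a boundary component of such an annulus. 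Your final linear-algebra step (writing $(m_1,\dots,m_k)$ as an integral combination of vectors $e_a-e_b$) is a clean way to see that every index appears.

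The only place to be careful is the normalization step you flag at the end: after putting $\Phi$ in standard form, one must argue that the frontier annuli of the characteristic piece can be isotoped so their boundary curves lie in $\bigcup_i C_i$ (rather than merely being disjoint from it), which uses that $\phi$ is supported exactly on that multicurve. This is handled in \cite{Oer} but deserves a sentence if you are writing the argument out in full.
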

Otherwise, there is at least  one non-pants component $ R\subset \overline{\partial H-\{C_{1},...,C_{k}\}}$.  In this case, either $R$ is compressible or incompressible in $H$. If $R$ is compressible in $H$, then  there is an essential simple closed curve $C\subset R$ disjoint from $\cup_{i=1}^{k} C_{i}$ bounding an essential disk in $H$.  If $R$ is incompressible in $H$, we have the following lemma.

\begin{lem}[Lemma 2.15, \cite{Oer}]
\label{lemma2.4}
If $R$ is incompressible in $H$, then either $H$ is an I-bundle, where one incompressible non-pants component of
$R$ is a horizontal boundary or  there is a
$f$-invariant  essential annulus $A\subset H$ so $\partial A\subset \{C_{1},...,C_{k}\}$.
\end{lem}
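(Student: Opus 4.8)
The plan is to apply Johannson's theory of the characteristic submanifold (the relative JSJ decomposition) to the handlebody $H$ equipped with the boundary pattern carried by the incompressible subsurface $R$, whose frontier consists of some of the curves $C_1,\dots,C_k$. Since $f$ extends over $H$, fix a homeomorphism $\hat f\colon H\to H$ with $\hat f|_S=f$. Because $\{C_1,\dots,C_k\}$ is the canonical minimal reducing system of $f$, it is $f$-invariant, so after an isotopy of $\hat f$ we may assume $\hat f(R)=R$ and $\hat f\bigl(\bigcup_i C_i\bigr)=\bigcup_i C_i$; if, at the end, one wants a single honestly invariant annulus rather than an invariant finite family of annuli, one may in addition replace $f$ by a suitable power, which changes neither the canonical reducing system nor the two alternatives in the statement.

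Since each $C_i$ bounds no disk in $H$ and $R$ is incompressible, $H$ is irreducible and the boundary pattern is essential (``useful''), so Johannson's theorems apply: there is a characteristic submanifold $\Sigma\subset H$ — a disjoint union of $I$-bundles over compact surfaces and of Seifert fibered pieces, unique up to ambient isotopy, whose frontier $\mathrm{Fr}(\Sigma)$ in $H$ is a union of essential annuli, and which encloses every essential annulus and every essential $I$-bundle in $(H,R)$. By uniqueness, $\hat f$ may be further isotoped so that $\hat f(\Sigma)=\Sigma$; in particular $\hat f$ permutes the (finitely many) components of $\mathrm{Fr}(\Sigma)$.

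Then I would split into two cases. If $\mathrm{Fr}(\Sigma)\neq\emptyset$, choose a frontier annulus $A$. Using incompressibility of $R$ (an essential annulus meeting $R$ can be isotoped to meet $R$ only along its frontier $\partial R$) together with the minimality of $\{C_1,\dots,C_k\}$, one arranges $\partial A\subset\bigcup_i C_i$. Since $\hat f$ preserves $\mathrm{Fr}(\Sigma)$ and the curves $C_i$ are pairwise non-isotopic, a suitable annulus in the finite $\hat f$-orbit of $A$ — or that orbit, viewed as a single invariant annulus after passing to a power of $f$ — is an $f$-invariant essential annulus with $\partial A\subset\{C_1,\dots,C_k\}$; this is the second alternative. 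If $\mathrm{Fr}(\Sigma)=\emptyset$, then $\Sigma=H$. Since $\pi_1H$ is free of rank $g\ge 2$, it has trivial center and contains no normal infinite cyclic subgroup, so $H$ is not Seifert fibered; hence $H$ is an $I$-bundle over a compact surface with nonempty boundary. As $R$ is a non-pants incompressible subsurface of $\partial H$ compatible with this structure, and every piece of $\partial H$ that is vertical in the $I$-bundle is a union of annuli (of Euler characteristic $0$), $R$ must be a horizontal boundary component of the $I$-bundle — a component of the $\partial I$-subbundle of $\partial H$; this is the first alternative.

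The step I expect to be the main obstacle is the first case: upgrading a frontier annulus $A$ to an honestly $f$-invariant essential annulus whose boundary lies \emph{exactly} on the canonical curves $C_i$. This requires the uniqueness of $\Sigma$ (to get $\hat f$-invariance of $\mathrm{Fr}(\Sigma)$), the incompressibility of $R$ and the minimality of $\{C_1,\dots,C_k\}$ (to slide $\partial A$ onto $\bigcup_i C_i$ without creating inessential or redundant curves of intersection), and a finite-orbit argument. Secondary technical points are: verifying that the boundary pattern is essential so that the machinery applies; showing that when $H$ is not an $I$-bundle the characteristic submanifold is nonempty (equivalently, that an incompressible non-pants piece $R$ forces an essential annulus); and checking, in the $I$-bundle case, that $R$ is genuinely a horizontal boundary component rather than some other essential subsurface.
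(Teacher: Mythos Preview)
The paper does not supply a proof of this lemma at all: it is quoted verbatim as Lemma~2.15 of Oertel~\cite{Oer}, and the argument you would be comparing against is Oertel's, not the present paper's. That said, your outline is exactly Oertel's method --- apply Johannson's characteristic submanifold to the pair $(H,\partial H)$ with the useful boundary pattern coming from the curves $C_i$, use uniqueness to make $\Sigma$ invariant under $\hat f$, and read off the dichotomy from whether $\mathrm{Fr}(\Sigma)$ is empty.

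There is one genuine gap in your sketch, and you half-acknowledge it as a ``secondary technical point'': the case $\Sigma=\emptyset$. You phrase the needed fact as ``an incompressible non-pants piece $R$ forces an essential annulus,'' but that is false in general --- there are handlebodies with useful boundary patterns that are simple (acylindrical). What actually rules this case out is the hypothesis that $f$ has infinite order. If $\Sigma=\emptyset$ then $(H,\text{pattern})$ is simple, and by Johannson's finiteness theorem its group of pattern-preserving homeomorphisms up to isotopy is finite; hence $\hat f$ would have finite order on $H$, contradicting that $f$ is infinitely reducible. You should insert this argument explicitly rather than hoping the annulus exists for purely topological reasons.

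A second, smaller point: in the case $\mathrm{Fr}(\Sigma)\neq\emptyset$ you will not in general get a single $\hat f$-invariant annulus without passing to a power, as you note. The lemma as stated in the paper asks for an $f$-invariant annulus, and in Oertel's formulation (and in the way the present paper uses the lemma) a finite $\hat f$-invariant system of disjoint essential annuli with boundary on $\bigcup_i C_i$ suffices, so be careful to state precisely what you are proving. The isotopy that moves $\partial A$ onto $\bigcup_i C_i$ is straightforward once you observe that the frontier annuli of $\Sigma$ are vertical with respect to the $I$-bundle pieces, hence their boundary curves already lie (up to isotopy) in the annular part of the boundary pattern, i.e.\ in neighborhoods of the $C_i$.
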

\subsection{Subsurface projection and disk complex} 
Let $F\subset S$ be an essential proper subsurface.  For simplicity, we assume that $F$ is neither an annulus nor a pair of pants. For any essential simple closed curve $C\subset S$,  if $C\cap F\neq \emptyset$  up to isotopy,  then either $C\subset F$ or $C\cap F$ consists of essential arcs in $F$. In \cite{MM}, Masur-Minsky defined the subsurface projection $\pi_{F}$ as the concatenation of these two maps $ \eta\circ \phi$, where
\begin{itemize}
\item $\phi(C)=C\cap F$;
\item $\eta(a)$ is the collection of essential simple closed curves of $\partial (\overline{N(a\cup \partial F)})$ in $F$, for any $a\subset C\cap F$.
\end{itemize}
Then they proved that for any two disjoint essential simple closed curve $C_{1}$ and $C_{2}$ in $S$, 
if both of them intersect $F$ nontrivially, then $diam_{\mathcal {C}(F)}(\pi_{F}(C_{1}), \pi_{F}(C_{2}))\leq 2$.

Let $H$ be a handlebody with $\partial H=S$. An essential proper subsurface $F\subset S$ is a compressible hole of $H$ if $F$ is compressible in $H$ and for any essential disk $D\subset H$,  $\pi_{F}(\partial D)\neq \emptyset$. Therefore, for a compressible hole $F$, $\partial F$ bounds no disk in $H$. Moreover, Masur-Schleimer \cite{MS} proved the following lemma. 

\begin{lem}
\label{lemma2.5}
If $F$ is a compressible hole, then for any essential disk $D\subset H$, either $\partial D\subset F$ or 
there is an essential disk $D^{'}\subset H$ and $\partial D^{'}\subset F$ so that $diam_{\mathcal {C}(F)}(\pi_{F}(\partial D), \partial D^{'})\leq 6$. 
\end{lem}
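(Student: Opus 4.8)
The plan is to compare $D$ against a fixed compressing disk $E$ of $F$ and to perform an outermost-arc surgery, tracking the subsurface projection $\pi_{F}$ at every step. First, realize $\partial D$ and $\partial F$ in minimal position; if $\partial D$ isotopes into $F$ one is already in the first conclusion of the lemma, so assume $\partial D\cap\partial F\neq\emptyset$, so that $\partial D\cap F$ is a non-empty union of essential arcs and $\pi_{F}(\partial D)=\eta(\partial D\cap F)$. Since $F$ is compressible in $H$, fix an essential disk $E\subset H$ with $\partial E\subset F$; because $F$ is a compressible hole, $\partial F$ bounds no disk of $H$, hence $\partial E$ is essential in $F$ and $\pi_{F}(\partial E)$ is just the vertex $\partial E$ of $\mathcal{C}(F)$. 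Isotope $D$ and $E$ so that $|D\cap E|$ is minimal; a standard innermost-disk argument in the handlebody $H$ removes every closed curve of $D\cap E$, so $D\cap E$ is a (possibly empty) collection of arcs.

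If $D\cap E=\emptyset$, then $\partial D$ and $\partial E$ are disjoint essential simple closed curves of $S$, both meeting $F$, so the Masur--Minsky inequality \cite{MM} recalled above gives $diam_{\mathcal{C}(F)}(\pi_{F}(\partial D)\cup\pi_{F}(\partial E))\leq 2$; since $\pi_{F}(\partial E)=\partial E$, this says $diam_{\mathcal{C}(F)}(\pi_{F}(\partial D),\partial E)\leq 2\leq 6$, and $D'=E$ finishes this case.

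Otherwise, choose an arc $a$ of $D\cap E$ outermost on $E$; it cuts off a sub-disk $E_{a}\subset E$ with $\partial E_{a}=a\cup b$, where $b\subset\partial E\subset F$ and the interior of $E_{a}$ is disjoint from $D$. Cutting $D$ along $a$ and banding each of the two pieces with a copy of $E_{a}$ pushed slightly into $H$ produces disks $D_{1},D_{2}$ such that $\partial D_{i}$ is disjoint from $\partial D$, $\partial D_{i}$ is isotopic to a curve built from a sub-arc of $\partial D$ and the arc $b$, and $|D_{i}\cap E|<|D\cap E|$. A standard outermost-surgery argument shows at least one $D_{i}$ is essential; replacing $D$ by such a $D_{i}$ and iterating, I obtain a surgery sequence $D=D_{0},D_{1},\dots,D_{n}$ of essential disks with $D_{n}\cap E=\emptyset$. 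Applying the base case to $D_{n}$ (or taking $D'=D_{n}$ if $\partial D_{n}$ already lies in $F$) yields an essential disk $D'\subset H$ with $\partial D'\subset F$ and $diam_{\mathcal{C}(F)}(\pi_{F}(\partial D_{n}),\partial D')\leq 2$.

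It remains to bound $diam_{\mathcal{C}(F)}(\pi_{F}(\partial D),\pi_{F}(\partial D_{n}))$, and this is the heart of the matter. A single surgery moves the projection by at most $2$, since consecutive $\partial D_{i}$ have disjoint boundaries, but one cannot afford to sum this over the $n$ steps. The structural fact to exploit is that each $\partial D_{i}$ is, up to isotopy, assembled from arcs of $\partial D$ together with sub-arcs of the single fixed curve $\partial E$; hence every arc of $\partial D_{i}\cap F$ is obtained from an arc of $\partial D\cap F$ by finitely many cut-and-paste moves along $\partial E$, and such moves do not let the projection drift away from $\pi_{F}(\partial D)\cup\{\partial E\}$. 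Pinning this down is precisely the surgery-sequence estimate of Masur--Schleimer, which shows that the sequence $\pi_{F}(\partial D_{i})$ does not backtrack and so displaces the projection by at most a universal constant; arranging the bookkeeping so that this contributes $\leq 4$, and using that $\pi_{F}(\partial D)$ has diameter $\leq 2$ in $\mathcal{C}(F)$, one concludes $diam_{\mathcal{C}(F)}(\pi_{F}(\partial D),\partial D')\leq 6$. The main obstacle is thus not the surgery mechanics but establishing this step-count-independent control of the subsurface projection along the surgery sequence.
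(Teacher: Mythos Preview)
The paper does not prove this lemma; it is simply quoted from Masur--Schleimer \cite{MS}, so there is no in-paper argument against which to compare your proposal.

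Your outline sets up the surgery framework correctly, and the disjoint case $D\cap E=\emptyset$ is handled cleanly. The genuine gap is exactly where you flag it: the step-count-independent bound on $diam_{\mathcal C(F)}\bigl(\pi_F(\partial D),\pi_F(\partial D_n)\bigr)$. You argue that each $\partial D_i$ is assembled from sub-arcs of $\partial D$ and of $\partial E$, and conclude that the projection cannot drift away from $\pi_F(\partial D)\cup\{\partial E\}$. That inference is not justified as stated. An arc of $\partial D_i\cap F$ is typically a concatenation of \emph{proper} sub-arcs of arcs of $\partial D\cap F$ joined by sub-arcs of $\partial E$, and such a concatenation can represent an arc class in $F$ that is far in $\mathcal C(F)$ from both $\pi_F(\partial D)$ and $\partial E$; nothing in the surgery mechanics alone prevents this over many steps. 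You then resolve the difficulty by invoking ``the surgery-sequence estimate of Masur--Schleimer,'' but that is circular: the estimate you are appealing to \emph{is} the content of the lemma, not an independent input you may cite.

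What Masur--Schleimer actually do is not to iterate outermost surgery against a fixed $E$ and then quote a separate projection bound. They use the hole hypothesis structurally: since every essential disk of $H$ cuts $F$, the complement $\overline{S\setminus F}$ is incompressible in $H$, and one works with outermost sub-disks of $D$ cut off by $\partial F$ (rather than by $E$). This produces, in a controlled way, a disk with boundary in $F$ while keeping track of a specific arc of $\partial D\cap F$, so the projection bound arises from a bounded amount of bookkeeping rather than from a sum over an unbounded surgery sequence; the constant $6$ records that bookkeeping. If you want a self-contained argument, that is the mechanism you need to reproduce.
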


%%%%%%%%%%%%%%%%%%%%%%%%%%%%%%%%%%%%%%%%%%%%%%%%%%%%%%%%%%%%%%%%%%%%%%%%%%%%%%%%%%%%%%%%%%%%%%%%%%%%%%%
\section{Proof of Theorem \ref{theorem1.1}}
\label{sec3}
\begin{proof}
Let $H_{1}\cup_{S}H_{2}$ be a Heegaard splitting and $Mod(S:H_{1}, H_{2})$ its mapping class group. 
By Nielson-Thurston classification, each element is either periodic, reducible or pseudo anosov.  To prove finiteness of $Mod(S:H_{1}, H_{2})$, it is sufficient to prove that there is no infinitely reducible or 
pseudo anosov element in it. We divide the proof into two subsection 3.1 and 3.2.

\subsection{} Suppose there is a pseudo anosov element $f\in Mod(S:H_{1}, H_{2})$. Then there are stable lamiantion $\mathcal {L}^{+}$ and unstable lamination $\mathcal {L}^{-}$ of $f$ on $S$. Since $f: H_{1}\rightarrow H_{1}$, for any $C$ bounding disk in $H_{1}$, $f^{-n}(C)$ goes to unstable lamination $L^{-}$ of $f$. Moreover, both $C$ and $L^{+}$ bound in $H_{1}$.  Then by Lemma \ref{lemma2.1} and \ref{lemma2.2},  there is an esential simple closed curve $a_{1}^{+}\cup a_{2}^{-}$ bounds a disk in $H_{1}$  so that  $a_{1}^{-}$ intersects 
$L^{+}$ in at most one point. Similarly, there is also $a_{2}^{+}\cup a_{2}^{-}$ bounds a disk in $H_{2}$ with the same property. Therefore 
$a_{1}^{+}$ intersects $a_{2}^{-} $  at most one point. Similar to $a_{1}^{-}$ and $a_{2}^{+}$. 

Since both $a_{1}^{+}$ and $a_{2}^{+}$ are two subarcs of $\mathcal{L}{+}$, $a_{1}^{+}\cup a_{1}^{-}$ intersects $a_{2}^{+}\cup a_{2}^{-}$ at most two points. Moreover, $a_{1}^{+}\cup a_{1}^{-}$ (resp. $a_{2}^{+}\cup a_{2}^{-}$) bounds an essential disk $D_{1}$ (resp.$D_{2}$) in $H_{1}$ (resp. $H_{2}$).  Then $\mid \partial D_{1}\cap \partial D_{2}\mid\leq 2$.  Hence the Heegaard distance $d(H_{1}, H_{2})\leq d(\partial D_{1}, \partial D_{2})\leq 2$. A contradiction.

\subsection{} Suppose that there is an infinitely reducible element $f\in Mod(S:H_{1}, H_{2})$. Let $\{C_{1},...,C_{2}\}$ be the canonical minimal  $f$-invariant curve system. 
\begin{clm}
$d(H_{1}, H_{2})\leq 2$.
\end{clm}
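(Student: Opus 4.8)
The plan is to exhibit an essential disk $D_{1}\subset H_{1}$, an essential disk $D_{2}\subset H_{2}$, and an index $m$ with $\partial D_{1}\cap C_{m}=\emptyset=\partial D_{2}\cap C_{m}$. We may assume that no $C_{i}$ bounds a disk in $H_{1}$ or in $H_{2}$, since otherwise such a $C_{i}$ is itself a disk boundary disjoint from $\bigcup_{j}C_{j}$ and a similar, easier argument applies. Granting the assumption, the curves $\partial D_{1}$, $C_{m}$, $\partial D_{2}$ are pairwise non-isotopic and consecutively disjoint, so $d(H_{1},H_{2})\le d(\partial D_{1},\partial D_{2})\le 2$, which is the Claim. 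If instead one can only force $|\partial D_{1}\cap\partial D_{2}|\le 2$, the intersection-number estimate from the pseudo-Anosov case above again gives $d(\partial D_{1},\partial D_{2})\le 2$.

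First I would record an auxiliary observation: \emph{a boundary component $c$ of an essential annulus $A$ in a handlebody $H$ is disjoint from the boundary of some essential disk of $H$}. To prove it, take any essential disk $D\subset H$ and isotope it to minimise $|D\cap A|$; innermost-disk moves together with the incompressibility of $A$ remove all circles of $D\cap A$, and standard isotopies remove the $\partial$-parallel arcs, so $D\cap A$ consists of essential spanning arcs of $A$. Were there such an arc, an outermost one on $D$ would cut off a boundary-compression of $A$, contradicting the $\partial$-incompressibility of $A$; hence $D\cap A=\emptyset$ and $\partial D$ misses $\partial A$, in particular $c$. (One also notes that an essential annulus in a handlebody cannot have both its boundary circles isotopic to a single curve, since capping off with the boundary annulus between them would yield an incompressible torus.)

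Next I would analyse the complementary regions of $\bigcup_{i}C_{i}$ in $S$ — the same subsurfaces whether $S$ is regarded as $\partial H_{1}$ or $\partial H_{2}$, each a pair of pants or a non-pants surface $R$ with $\chi(R)\le -1$. \emph{If some non-pants $R$ is compressible in $H_{1}$} (the case of $H_{2}$ being symmetric), there is an essential disk $D_{1}\subset H_{1}$ with $\partial D_{1}\subset R$, hence disjoint from every $C_{i}$; if $R$ is also compressible in $H_{2}$ we obtain such a $D_{2}$ and finish with any $m$, while if $R$ is incompressible in $H_{2}$ then Lemma~\ref{lemma2.4} yields either that $H_{2}$ is an $I$-bundle with $R$ a horizontal boundary — in which case each core of a vertical boundary annulus is the boundary of an essential annulus and lies in $\bigcup_{i}C_{i}$, so is missed by a disk of $H_{2}$ by the auxiliary observation — or an $f$-invariant essential annulus $A_{2}\subset H_{2}$ with $\partial A_{2}\subset\bigcup_{i}C_{i}$, in which case the auxiliary observation gives a disk of $H_{2}$ missing a boundary curve $C_{m}$ of $A_{2}$; either way $\partial D_{1}\subset R$ misses $C_{m}$ and we are done. \emph{If $\bigcup_{i}C_{i}$ is a pants decomposition of $S$}, Lemma~\ref{lemma2.3} makes every $C_{i}$ a boundary of an essential annulus in each of $H_{1}$ and $H_{2}$, and the auxiliary observation finishes with $m=1$.

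The remaining case — every non-pants region incompressible in both handlebodies — is where I expect the real work. Fixing such a region $R$, Lemma~\ref{lemma2.4} applies on both sides; when at least one side gives the $I$-bundle alternative, or when the two $f$-invariant annuli it produces share a boundary curve, one finishes as above. The delicate subcase is two $f$-invariant essential annuli $A_{1}\subset H_{1}$ and $A_{2}\subset H_{2}$ with disjoint boundary curves: the auxiliary observation then only produces disks missing two distinct (yet mutually disjoint) invariant curves, which a priori gives only $d(H_{1},H_{2})\le 3$. \emph{Closing this gap is the main obstacle.} I would attack it by exploiting the $f$-invariance of $A_{1}$ and $A_{2}$ and the minimality of the canonical system, and by applying the already-settled cases to the complementary regions adjacent to a fixed boundary curve $C_{a}$ of $A_{1}$, to force $C_{a}$ to be missed by an essential disk of $H_{2}$ as well — hence a single invariant curve missed by disks on both sides. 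That a distance-$2$ Heegaard splitting carrying an infinitely reducible element exists (Corollary~\ref{cor1.1}) shows that the bound cannot be improved, and that $d(H_{1},H_{2})=2$ is exactly the target in this last subcase.
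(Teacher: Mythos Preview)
Your proposal is explicitly incomplete: you yourself flag the ``main obstacle'' in the final subcase---when every non-pants complementary region is incompressible in both handlebodies and Lemma~\ref{lemma2.4} returns, on each side, an $f$-invariant essential annulus $A_{1}\subset H_{1}$, $A_{2}\subset H_{2}$ whose boundary curves lie in $\{C_{1},\dots,C_{k}\}$ but are \emph{disjoint} from one another. Your auxiliary observation then only produces $D_{1}$ missing $\partial A_{1}$ and $D_{2}$ missing $\partial A_{2}$, which gives $d(H_{1},H_{2})\le 3$, not $2$. The sketch ``exploit $f$-invariance and minimality, apply settled cases to adjacent regions'' is not an argument; nothing you have written forces the two annuli to share a boundary curve, and in general they need not.

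What the paper does that you do not is bring the \emph{dynamics of $f$ on the complementary pieces} into play. In the incompressible case it splits according to whether $f|_{R}$ is periodic or has a pseudo-Anosov component on some $R_{0}$. In the periodic subcase a power of $f$ is a product of Dehn twists along the $C_{i}$, and Oertel's result (the paper's Lemma~\ref{lemma2.3}) then gives, for \emph{each} $C_{i}$, an essential annulus in the handlebody with $C_{i}$ on its boundary---so the same curve works on both sides. In the pseudo-Anosov subcase the paper invokes the $I$-bundle alternative of Lemma~\ref{lemma2.4} relative to the \emph{specific} subsurface $R_{0}$; since $R_{0}$ is the same subsurface whether viewed from $H_{1}$ or $H_{2}$, the $I$-bundle structures on the two sides are over the same base, and the resulting annuli satisfy $\partial A_{1}=\partial A_{2}$ (this is Fact~\ref{fact3.1}). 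That coincidence of boundaries is precisely what closes your gap, and it is not available from your purely topological annulus-and-disk argument.
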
 
\begin{proof}
In order to prove this claim, it is sufficient to find two essential disks $D_{1}\subset H_{1}$ and 
$D_{2}\subset H_{2}$ so that $d(\partial D_{1}, \partial D_{2}) \leq 2$.

If some curve of $\{C_{1},..., C_{k}\}$ bounds a disk in $H_{1}$, without loss of generacity, we assume that
$C_{1}$ bounds a disk $D$. Let $D_{1}=D$. Then $\partial D \cap \cup_{i=1}^{k} C_{i}=\emptyset$.
 Otherwise, $\{C_{1},...C_{k}\}$ are incompressible in $H_{1}$. 

Case 3.1. $R=\overline{\partial H_{1}-\{C_{1},...,C_{k}\}}$ consists of pair of pants. Then by Lemma \ref{lemma2.3}, for each $1\leq i\leq k$, there is an essential disk 
$D_{i}$ so that $\partial D_{i}\cap C_{i}=\emptyset$.

Case 3.2. $R$ is compressible in $H_{1}$. Then there is an essential disk 
$D\subset H_{1}$ so that $\partial D\cap \cup_{i=1}^{k} C_{i}=\emptyset$. Let $D_{1}=D$.

Case 3.3. $R$ is incompressible in $H_{1}$. If  $f\mid_{R}$ is  periodic, then some power of $f$ is identity map on $R$. Then some power of $f$ is the concatenation of some powers of Dehn twists along $C_{i}$. So by Lemma \ref{lemma2.3},  for each $1\leq i\leq k$,  there is an essential disk $D_{i}\subset H_{1}$ so that 
$\partial D_{i}\cap C_{i}=\emptyset$. The left case is that the restriction of  $f$ on one component $R_{0}\subset R$ is a pseudo Anosov map. By Lemma \ref{lemma2.4}, $H_{1}$ contains an I-bundle with one horizontal surface $R_{0}$.
\begin{fact}
\label{fact3.1}
$H_{1}$ is not an I-bundle with one horizontal surface $R_{0}$.
\end{fact}
\begin{proof}
Suppose the conclusion  is false.  Then  $H_{1}$ is an product I-bundle over $R_{0}$ or a twisted I-bundle with  the horizontal surface $R_{0}$.  
If  $R_{0}$ is compressible in $H_{2}$, then there is a disk $D_{2}\subset H_{2}$ where $\partial D_{2}\cap \cup_{i=1}^{k}C_{i}=\emptyset$. 
In each case, as $H_{1}$ is not a solid torus,  there is  an essential annulus $A\subset H_{1}$ so that $\partial A \cap \partial D_{2}=\emptyset$. Then $d(H_{1}, H_{2})\leq 2$.
So we also assume that $R_{0}$ is incompressible in $H_{2}$. Since $f\mid_{R_{0}}$ is pseudo anosov,  then by Lemma \ref{lemma2.4}, $H_{2}$ is an I-bundle with a horizontal boundary surface $R_{0}$  or contains an I-bundle with a horizontal boundary surface $R_{0}$. In either of these two cases,  there are two essential annulus $A_{1}\subset H_{1}$ and $A_{2}\subset H_{2}$ so that $\partial A_{1}=\partial A_{2}$. Hence $d(H_{1}, H_{2})\leq 2$. 
\end{proof}
The proof of Fact \ref{fact3.1} implies that $H_{1}$ contains no I-bundle as  $R_{0}$ is an horizontal surface. By the similar argument as above, so does $H_{2}$. Then 
there is disk $D_{i}\subset H_{i}$ disjoint from $C_{1}$ for $i=1,2$.   Hence $d(H_{1}, H_{2})\leq d(\partial D_{1}, \partial D_{2})\leq 2.$ A contracdiction.

\end{proof}

By the discussion in 3.1 and 3.2,  each element $f\in Mod(S:H_{1},H_{2})$ is periodic. Hence $Mod(S:H_{1}, H_{2})$ is a finite group.

\end{proof}
\section{Infinite mapping class group of Heegaard splitting }
\label{sec4}
In this section, we build a distance two Heegaard splitting $H_{1}\cup_{S} H_{2}$ so that $Mod(S:H_{1}, H_{2})$ contains an infinitely reducible element.  

Let $H$ be a genus $g$ hanlebody and $D\subset H$ a nonseparating essential disk. We choose a nonseparating essential simple closed curve $C$ disjoint from and non-isotopic to $\partial D$, which bounds no disk in $H$. Then doing a bandsum between 
$D$ and the annulus $A=C\times I$ produces an essential annulus $A^{'}\subset H$.  It is not hard to see that 
$\partial A^{'}$ is nonseparating in $\partial H$.  Denote $\overline{\partial H-\partial A^{'}}$ by $S^{'}$.
Let $\mathcal {D}(S^{'})=\{\partial D\mid \partial D \cap \partial A^{'}=\emptyset, \text{$D$ is an essential disk in $H$.}\}$

Since $g(S)\geq 2$,  $S^{'}$ supports a pseudo anosov map. Let $h$ be a generic pseudo anosov map on $S^{’}$, i.e.,  of which neither stable nor unstable lamination lies in $\overline{\mathcal {D}(S^{'})}$ in $\mathcal {PML}(S^{'})$.  Then $d_{\mathcal {C}(S^{'})}(\mathcal {D}(S^{'}),  h^{n}(\mathcal {D}(S^{'})))\rightarrow \infty$ as $n\rightarrow \infty$. So there is a number $K$ so that \[d_{\mathcal {C}(S^{'})}(\mathcal {D}(S^{'}),  h^{K}(\mathcal {D}(S^{'})))\geq 15.\]

Since $h$ acts on $S^{'}$, it can be extended into a homeomorphism $h$ of $\partial H$ by defining $h\mid_{ \partial A^{'}}=Id$. Hence $H_{1}\cup_{S} H_{2}=H\cup_{h^{K}}H$ is a Heegaard splitting. Moreover, its distance is 2. 

\begin{clm}
$d(H_{1},H_{2})=2$.
\end{clm}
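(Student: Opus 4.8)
The plan is to establish $d(H_1,H_2)=2$ by showing both $d(H_1,H_2)\le 2$ and $d(H_1,H_2)\ge 2$. The upper bound is essentially built into the construction: the curve $\partial A'$ (a component of it, or each of the two parallel components) is disjoint from disks on both sides. Indeed, $\partial A'$ is disjoint from $\partial D'$ for every $\partial D'\in\mathcal D(S')$, and since $h|_{\partial A'}=\mathrm{Id}$, the curve $\partial A'$ is likewise disjoint from every curve in $h^K(\mathcal D(S'))$. Picking a disk $D_1\subset H_1$ with boundary in $\mathcal D(S')$ and a disk $D_2\subset H_2$ with boundary in $h^K(\mathcal D(S'))$, we get the path $\partial D_1,\partial A', \partial D_2$, so $d(H_1,H_2)\le 2$; and since the splitting is strongly irreducible by construction (no disk of $H_1$ is disjoint from a disk of $H_2$ — this also needs checking), $d(H_1,H_2)\ge 2$.

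The real content is the lower bound argument that rules out distance $\le 1$, i.e., that there is no essential disk $D_1\subset H_1$ and essential disk $D_2\subset H_2$ with $\partial D_1\cap\partial D_2=\emptyset$. Here I would use the subsurface $S'=\overline{\partial H-\partial A'}$ and the subsurface projection $\pi_{S'}$. The key observations are: (i) every essential disk of $H_1$ either has boundary disjoint from $\partial A'$ — hence is (isotopic into) $\mathcal D(S')$ — or its boundary projects into $S'$; similarly every disk of $H_2$ either lies in $h^K(\mathcal D(S'))$ or projects into $S'$. The point is that $S'$ is a compressible hole for both $H_1$ and $H_2$ (because $\partial A'$ bounds no disk in $H$: $\partial A'$ was obtained by a bandsum along the non-isotopic, disk-busting-enough curve $C$), so Lemma \ref{lemma2.5} applies on both sides. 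Thus for any disk $D_1\subset H_1$ there is a disk $D_1'\subset H_1$ with $\partial D_1'\subset S'$, $\partial D_1'\in\mathcal D(S')$, and $\mathrm{diam}_{\mathcal C(S')}(\pi_{S'}(\partial D_1),\partial D_1')\le 6$; likewise for $D_2\subset H_2$ there is $D_2'$ with $\partial D_2'\in h^K(\mathcal D(S'))$ and $\mathrm{diam}_{\mathcal C(S')}(\pi_{S'}(\partial D_2),\partial D_2')\le 6$.

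Now suppose toward a contradiction $\partial D_1\cap\partial D_2=\emptyset$. Both curves meet $S'$ nontrivially (neither bounds a disk in $H$ after capping off $\partial A'$ in a way that contradicts strong irreducibility — this is where one rules out the degenerate cases $\partial D_i\subset\partial A'$ collar, which is impossible since $\partial A'$ is a single isotopy class of non-disk-bounding curves). Hence by the Masur–Minsky bound their projections satisfy $\mathrm{diam}_{\mathcal C(S')}(\pi_{S'}(\partial D_1),\pi_{S'}(\partial D_2))\le 2$. Combining with the two bounds of size $6$ and the triangle inequality, we get
\[
d_{\mathcal C(S')}\big(\mathcal D(S'),\,h^K(\mathcal D(S'))\big)\le 6+2+6=14,
\]
contradicting the choice $d_{\mathcal C(S')}(\mathcal D(S'),h^K(\mathcal D(S')))\ge 15$. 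Therefore no such pair of disjoint disks exists, $d(H_1,H_2)\ge 2$, and together with the upper bound $d(H_1,H_2)=2$.

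The main obstacle I anticipate is the bookkeeping around the degenerate cases: verifying that $S'$ is genuinely a compressible hole for each $H_i$ (equivalently, that $\partial A'$ bounds no disk in $H$ and that $H$ does compress in the complement of $\partial A'$), and handling the possibility that $\partial D_i$ could be isotoped to lie in $S'$ already versus being a peripheral curve of $S'$ — in the former case the $6$-bound is replaced by a $0$-bound and the estimate only improves, so the margin $15 > 14$ is comfortable, but the argument must be phrased to cover both. A secondary point is confirming that the constant $15$ (rather than $14+\varepsilon$) is what the construction in Section \ref{sec4} actually guarantees, which it does, so the inequality is strict as needed.
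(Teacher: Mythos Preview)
Your proposal is correct and follows essentially the same route as the paper's proof: the upper bound via disks on both sides disjoint from $\partial A'$, and the lower bound by assuming disjoint disks $D_1,D_2$, invoking that $S'$ is a compressible hole for both handlebodies, applying Lemma~\ref{lemma2.5} to get the two $6$-bounds, the Masur--Minsky $2$-bound on disjoint curves, and concluding $d_{\mathcal C(S')}(\partial D_1',\partial D_2')\le 14$ in contradiction with the chosen $K$. The paper's argument is exactly this, with the same constants, and (like you) it simply asserts that $S'$ is a compressible hole without dwelling on the degenerate cases you flag.
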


\begin{proof}
Since for any handlebody $H_{i}$, for $i=1,2$,  there is an essentail disk $D_{i}$ disjoint from $\partial A^{'}$,  $d(H_{1},H_{2})\leq 2$. Suppose that the conclusion is false.  Then $d(H_{1}, H_{2})\leq 1$. Hence 
there are a pair of essential disks $D_{1}\subset H_{1}$ and $D_{2}\subset H_{2}$ so that $\partial D_{1}\cap \partial D_{2}=\emptyset$. 

It is not hard to see that $S^{'}$ is a compressible hole for both of $\mathcal {D}(H_{1})$ and 
$\mathcal {D}(H_{2})$. Then by Lemma \ref{lemma2.5}, for $D_{1}\subset H_{1}$, there is an essential disk $D_{1}^{'}\subset H_{1}$ where $\partial D_{1}^{'}\subset S^{'}$ so that $diam_{\mathcal {C}(S^{'})}(\pi_{S^{'}}(\partial D_{1}),\partial D_{1}^{'})\leq 6$. Similarly,  there is also an essential disk $D_{2}^{'}\subset H_{2}$ so that $diam_{\mathcal {C}(S^{'})}(\pi_{S^{'}}(\partial D_{2}),\partial D_{2}^{'})\leq 6$. Additionally,
since $\partial D_{1}$ is disjoint from $\partial D_{2}$, $diam_{\mathcal {C}(S^{'})}(\pi_{S^{'}}(\partial D_{1}),\pi_{S^{'}}(\partial D_{2})\leq 2$.  In conclusion,  $d_{\mathcal {C}(S^{'})}(\partial D_{1}^{'},\partial D_{2}^{'})\leq 14$. A contradiction.
\end{proof}

Denote $\partial A^{'}=C_{1}\cup C_{2}$. It is not hard to see that $C_{1}$ is not isotopic to  $C_{2}$.
Then the contenation of two Dehn twists $\psi=T_{C_{1}}\circ T_{C_{2}}^{-1}$ has infinitely order.  
To prove that $\psi$ extends over $H_{1}$, it is sufficient to prove that there is a minimal disk system 
$\{D_{1},...,D_{g}\}$ so that $\psi(\{D_{1},...,D_{g}\})$ is also a disk system. Therefore, we only need to prove that $\psi(D_{i})$ is an essential disk in $H_{1}$, for any $1\leq i\leq g$. 

Let $\{D_{1},...,D_{g-1}\}$ be  the collection of $g-1$ pairwise disjoint disks disjoint from $A^{'}$ and $D_{g}$  the disk 
 intersecting  $A^{'}$ in one arc.  Since $C_{1}\cup C_{2}$ are disjoint from $\partial D_{i}$, for $1\leq i\leq g-1$, $\psi(D_{i})=D_{i}$. For $\partial D_{g}$,  $h(\partial D_{g})$ still bounds an essential disk in $H_{1}$. Therefore $\psi(H_{1})=H_{1}$. The argument for $H_{2}$ is similar. So we omit it. 
Hence $\psi=T_{C_{1}}\circ T_{C_{2}}^{-1}\subset Mod(S: H_{1}, H_{2})$ has an infinite order.

%%%%%%%%%%%%%%%%%%%%%%%%%%%%%%%%%%%%%%%%%%%%%%%%%%%%%%%%%%%%%%%%%%%%%%%%%%%%%%%%%%%%%%%%%%%%%%%%%%%%%%%%%%%%%%%%%%

Yanqing Zou\\
School of Mathematical Sciences \& Shanghai Key Laboratory of PMMP\\
East China Normal University\\
E-mail: yqzou@math.ecnu.edu.cn

\end{document}